\newcommand{\CM}{Cohen-Macaulay}
\newcommand{\Ic}{\mathcal{I} }
\newcommand{\m}{\mathfrak{m} }
\newcommand{\Rc}{\mathcal{R} }
\newcommand{\C}{\mathcal{C} }
\newcommand{\AR}{\mathcal{AR} }
\newcommand{\Sc}{\mathcal{S} }
\newcommand{\Z}{\mathbb{Z} }
\newcommand{\Q}{\mathbb{Q} }
\newcommand{\rt}{\rightarrow}
\newcommand{\ov}{\overline}
\newcommand{\rank}{\operatorname{rank}}
\newcommand{\add}{\operatorname{add}}
\newcommand{\charp}{\operatorname{char}}
\newcommand{\CMa}{\operatorname{CM}}
\newcommand{\Hom}{\operatorname{Hom}}
\newcommand{\Ext}{\operatorname{Ext}}
\newcommand{\md}{\operatorname{mod}}
\theoremstyle{plain}
\newtheorem{theorem}{Theorem}[section]
\newtheorem{corollary}[theorem]{Corollary}
\newtheorem{lemma}[theorem]{Lemma}
\newtheorem{proposition}[theorem]{Proposition}
\theoremstyle{definition}
\newtheorem{definition}[theorem]{Definition}
\newtheorem{remark}[theorem]{Remark}
\theoremstyle{remark}
\numberwithin{equation}{section}
\begin{document}

\title{On $G(A)_\Q$ of rings of finite representation type}
\author{Tony~J.~Puthenpurakal}
\date{\today}
\address{Department of Mathematics, IIT Bombay, Powai, Mumbai 400 076}

\email{tputhen@math.iitb.ac.in}
\subjclass{Primary  13D15; Secondary 16G50, 16G60, 16G70 }
\keywords{ Grothendieck group, finite representation type, AR sequence }

 \begin{abstract}
 Let $(A,\m)$ be an excellent Henselian \CM \ local ring of finite representation type. 
 If the AR-quiver of $A$  is known then  by a result of Auslander and Reiten   one can explicity  compute $G(A)$ the Grothendieck group of finitely generated $A$-modules. If the AR-quiver is not known then in this paper we 
 give estimates of $G(A)_\Q = G(A)\otimes_\Z \Q$ when $k = A/\m$ is perfect. As an application we prove that if $A$ is an excellent equi-characteristic  Henselian Gornstein local ring of positive even dimension with $\charp A/\m \neq 2,3,5$ (and $A/\m$ perfect) then  $G(A)_\Q  \cong \Q$.
\end{abstract}
 \maketitle

\section{introduction}
Let $(A,\m)$ be a Henselian Noetherian local ring. Then it is well-known that the category of finitely generated $A$-modules satisfy the Krull-Schmidt property,  i.e., every finitely generated $A$-module is uniquely
a direct sum of indecomposable $A$-modules (with local endomorphism rings).  Now assume that $A$ is \CM.  Then we say $A$ is of finite
(\CM) representation type if $A$ has only finitely many indecomposable maximal \CM \ $A$-modules upto isomorphism. 
To study
(not necessarily commutative) Artin algebra's
 Auslander and Reiten introduced the theory  of almost-split sequences.
These are now called AR-sequences. 
Later Auslander and Reiten extended the theory of AR-sequences to the case of commutative Henselian isolated singularities.
Good references for this topic are \cite{Y} and \cite{LW}. Let $\CMa(A)$ denote the full subcategory of maximal \CM \  (= MCM) $A$-modules.

\begin{remark}
Note we can define Grothendieck group of any extension closed subcategory $\mathcal{S}$ of $\md (A)$ the category of all finitely generated $A$-modules, we denote it by $G(\mathcal{S})$. By \cite[13.2]{Y} the natural map $G(\CMa(A)) \rt G(\md(A)$ is an isomorphism. Throughout this section we work with $G(\CMa(A))$ and by abuse of notation denote it by $G(A)$. 
\end{remark}

Let $A$ be a Henselian  \CM \ local of finite representation type. Set $\Ic_A$ to be the set of all indecomposable MCM  $A$-modules upto isomorphism.
If $W$ is a subset of $\Ic_A$ then set $\add(W)$ be the set consisting of finite direct sums of elements of $W$.
 Also let $\AR(A)$ denote the set of all AR-sequences in $A$ upto isomorphism.
Let $F(A)$ be the free abelian group generated on $\add(\Ic_A)$. Let $\AR_0(A)$ be the subgroup of $F(A)$ generated by 
$$\{ X_1-X_2 + X_3 \mid  \text{ there is a sequence} \  0 \rt X_1 \rt X_2 \rt X_3 \rt 0 \ \text{in} \ \AR(A) \}.$$
By a result due to Auslander-Reiten \cite[2.2]{AR} (also see \cite[13.7]{Y}) we have  $G(A) = F(A)/\AR_0(A)$. 

Computing AR-sequences is usually a tedious task and usually we assume $A$ is equi-characteristic, complete with algebraically closed residue field. The main objective of this paper is to give estimates of rank of $G(A)$ when the residue field is perfect but not necessarily algebraically closed. 

In our  introduction let us assume $(A,\m)$ is an excellent Henselian \\  \CM \ local of finite representation type and containing a field isomorphic to $k = A/\m$ which we also denote by $k$. (In our proofs we will deal with a more general case). 
We assume $k$ is perfect.
Let $\ov{k}$ be the algebraic closure of $k$. Let  
$$\C_k = \{E \mid E \ \text{is a finite extension of } \ k, \ \text{and} \ E \subseteq \ov{k} \}.$$
For each $E$ in $\C_k$ set $A^E = A\otimes_k E$. We have an obvious directed system of rings $\{ A^E \}_{E \in \C_k}$. Set $T = \lim_{E \in \C_k} A^E = A\otimes_k \ov{k}$. Then $A^E, T$ are excellent Henselian \CM \ local of finite representation type. So $\widehat{T}$ the completion of $T$ is \CM \ of finite representation type. 
It is not difficult to show $G(T) \cong G(\widehat{T})$. 
For $k \subseteq F \subseteq E $, where $E, F \in \C_k$, 
we have an obvious map
$\eta^E_F \colon G(A^F) \rt G(A^E)$  given by $M \rt M\otimes_{A^F} A^E $.
It is clear that we have a direct system of abelian groups $\{G(A^E)\}_{E \in \C_k}$. So we have 
an abelian group $\lim_{E \in \C_k}G(A^E) $ and natural maps $\eta_E \colon G(A^E) \rt \lim_{E \in \C_k}G(A^E)$.
 Let $E \in \C_k$. As $T$ is a flat $A^E$-algebra we have an obvious map
$\xi_E \colon G(A^E) \rt G(T)$  given by $M \rt M\otimes_{A^E} T $.  The maps $\xi_E$ are compatible with $\eta^E_F$ whenever $k \subseteq F \subseteq E$.
So we have a natural map
\[
\xi \colon \lim_{E \in \C_k}G(A^E) \rt G(T).
\]
Our main result is
\begin{theorem}
\label{main-intro} $\xi$ is an isomorphism.
\end{theorem}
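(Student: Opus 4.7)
The plan is to prove both surjectivity and injectivity of $\xi$ by classical descent from $T$ down to finite-level rings $A^E$, using two core facts. First, each map $A^E \to T$ is a flat local homomorphism of Noetherian local rings, hence faithfully flat, with zero-dimensional closed fiber $T/\m T = \ov{k}$. By the depth formula for flat extensions, $\depth_T(M \otimes_{A^E} T) = \depth_{A^E}(M)$ for every finitely generated $A^E$-module $M$, so $M \in \CMa(A^E)$ iff $M \otimes_{A^E} T \in \CMa(T)$. Second, since $T = \lim_{E \in \C_k} A^E$ is a filtered colimit of Noetherian rings, every finitely presented $T$-module (respectively morphism, short exact sequence among such modules) arises by base change from a corresponding finitely presented datum over some $A^F$ with $F \in \C_k$.

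For surjectivity, let $N$ be an MCM $T$-module. Noetherianity of $T$ makes $N$ finitely presented, so by descent there exist $E \in \C_k$ and a finitely presented $A^E$-module $M$ with $M \otimes_{A^E} T \cong N$; the depth criterion forces $M$ to be MCM, and then $\xi(\eta_E([M])) = [N]$.

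For injectivity, suppose $\alpha \in \lim_{E \in \C_k} G(A^E)$ with $\xi(\alpha) = 0$, and pick $E$ and $\alpha_E = [M] - [M']$ representing $\alpha$ with $M, M' \in \CMa(A^E)$. Viewing $G(T)$ as the Grothendieck group of the exact category $\CMa(T)$, the vanishing $[M \otimes T] = [M' \otimes T]$ in $G(T)$ is witnessed by a finite relation of the form
\[
[M \otimes_{A^E} T] - [M' \otimes_{A^E} T] = \sum_i \epsilon_i \bigl([X_i] - [Y_i] + [Z_i]\bigr)
\]
in the free abelian group on isomorphism classes of MCM $T$-modules, where each $0 \to X_i \to Y_i \to Z_i \to 0$ is a short exact sequence in $\CMa(T)$. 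By descent, for some sufficiently large $F \supseteq E$ in $\C_k$, every $X_i, Y_i, Z_i$ arises by base change from an MCM $A^F$-module (MCM by the depth formula), every short exact sequence arises by base change from a short exact sequence in $\CMa(A^F)$ (exactness preserved by flatness), and---crucially---every isomorphism implicit in collecting like isomorphism classes on the two sides of the equation lifts to an isomorphism over $A^F$. Here one uses faithful flatness of $A^F \to T$: a morphism between finitely presented $A^F$-modules is an isomorphism iff its base change to $T$ is one. Consequently the same equation holds in the free abelian group on isomorphism classes of MCM $A^F$-modules, and its right-hand side lies in the subgroup generated by short-exact-sequence relations. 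Therefore $\eta^F_E(\alpha_E) = [M \otimes_{A^E} A^F] - [M' \otimes_{A^E} A^F] = 0$ in $G(A^F)$, giving $\alpha = 0$ in the colimit.

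The principal technical obstacle is the simultaneous descent in the injectivity step: not just finitely many modules and short exact sequences, but also the finitely many isomorphisms implicit in the free-abelian-group equation, must all be arranged to live over a single finite-level ring $A^F$. This is handled by the standard descent theory of finitely presented modules over filtered colimits of Noetherian rings, combined with faithful flatness of $A^F \to T$ to descend isomorphisms after enlarging $F$ by a finite amount.
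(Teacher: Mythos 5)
Your proof is correct, and it takes a genuinely different route from the paper's. The paper does not argue injectivity of $\xi$ directly on the level of the universal presentation of the Grothendieck group. Instead, it exploits the Auslander--Reiten theorem that for finite representation type $G(T) = F(T)/\AR_0(T)$, where $F(T)$ is free on $\add(\Ic_T)$ and $\AR_0(T)$ is generated by the finitely many AR-sequences. It descends these finitely many AR-sequences (and the finitely many indecomposables of $T$) once and for all to a fixed $A^F$, forms auxiliary groups $D(E)$ defined by the same generators and relations over each $A^E \supseteq A^F$, and observes that $D(E) \to G(T)$ is an isomorphism and that the resulting triangle $\lim D(E) \to \lim G(A^E) \to G(T)$ commutes. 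Injectivity of $\xi$ then reduces to surjectivity of $\alpha \colon \lim D(E) \to \lim G(A^E)$, which is proved by a single clean element-wise descent: given $M$ over $A^E$, decompose $M \otimes T$ into indecomposables, form the corresponding $V \in \add(\Ic'_{A^E})$, and use descent of isomorphisms to show $M \otimes A^K \cong V \otimes A^K$ for large $K$.

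Your argument is more self-contained in the sense that it never invokes the AR presentation; you work directly with the universal description of $G(\CMa(-))$ via all short exact sequences, and for each kernel element you descend the finitely many short exact sequences and isomorphisms witnessing the relation over $T$. This is a standard filtered-colimit descent pattern, and it buys generality (the argument would apply even without knowing that AR sequences generate all relations). The cost is that you must re-descend a fresh finite collection of data for each kernel element, and you must be careful about the simultaneous descent of modules, exact sequences, and the isomorphisms implicit in equality of free-abelian-group elements; you flag this correctly. One small imprecision worth fixing: exactness of the descended sequences over $A^F$ is not ``preserved by flatness'' (that is the wrong direction) but rather \emph{detected} by the faithfully flat map $A^F \to T$; you invoke faithful flatness elsewhere, so this is a wording issue rather than a gap. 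Both proofs ultimately rest on the same descent lemma (Lemma~\ref{l-p-t} here, from \cite[4.9]{Pu}).
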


Theorem \ref{main-intro} does not give us any estimates on $G(A)$. If $H$ is an abelian group then we set $H_\Q = H \otimes_\Z \Q$ and if $f \colon H \rt L$ is a homomorphism of abelian groups then we set $f_\Q$ to be map from $H_\Q \rt L_\Q$ induced by $f$. 

It is well-known that direct limits commutes with tensor products, see \cite[Theorem A1, p.\ 270]{Mat}. So we have an isomorphism
\[
\xi_\Q \colon \lim_{E \in \C_k}G(A^E)_\Q \rt G(T)_\Q.
\]
Our next result essentially an observation.
\begin{proposition}\label{inj-intro}
Let $F \in \C_k$ be any. Then the map 
$$(\eta_F)_\Q \colon G(A^F)_\Q \rt \lim_{E \in \C_k}G(A^E)_\Q$$
 is an injection. In particular we have an injection from $G(A)_\Q$ to $G(T)_\Q$.
\end{proposition}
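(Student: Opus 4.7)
The plan is to construct, for each tower $F \subseteq E$ in $\C_k$, a restriction-of-scalars homomorphism $\rho^E_F \colon G(A^E) \to G(A^F)$ that serves as a left inverse of $\eta^E_F$ up to the factor $d := [E:F]$. Rationally this factor is a unit, so $(\eta^E_F)_\Q$ becomes injective, and injectivity into the filtered colimit then follows by a standard colimit chase.

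Since $E$ is free of rank $d$ over $F$, the ring $A^E = A^F \otimes_F E$ is free of rank $d$ as an $A^F$-module, so $A^F \hookrightarrow A^E$ is a finite flat local extension. For any finitely generated $A^E$-module $M$ one has $\dim_{A^F} M = \dim_{A^E} M$ and $\depth_{A^F} M = \depth_{A^E} M$; hence restriction of scalars sends $\CMa(A^E)$ into $\CMa(A^F)$ and is exact, so it induces a well-defined $\rho^E_F$ on Grothendieck groups. For $N \in \CMa(A^F)$, the $A^F$-module underlying $N \otimes_{A^F} A^E$ is $N \otimes_F E \cong N^{\oplus d}$, giving
\[
\rho^E_F \circ \eta^E_F = d \cdot \mathrm{id}_{G(A^F)}.
\]
After tensoring with $\Q$, this composition becomes multiplication by the nonzero rational $d$, so it is invertible and $(\eta^E_F)_\Q$ is injective.

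Suppose now $x \in G(A^F)_\Q$ satisfies $(\eta_F)_\Q(x) = 0$ in $\lim_{E \in \C_k} G(A^E)_\Q$. Since rationalization commutes with filtered colimits, there is some $E \supseteq F$ in $\C_k$ with $(\eta^E_F)_\Q(x) = 0$; applying $(\rho^E_F)_\Q$ gives $d\, x = 0$, whence $x = 0$. Taking $F = k$ and post-composing with the isomorphism $\xi_\Q$ of Theorem~\ref{main-intro} yields the injection $G(A)_\Q \hookrightarrow G(T)_\Q$. The only delicate point is verifying that restriction of scalars is well-defined on $G(\CMa(-))$, and this is transparent from the finite-freeness of $A^E$ over $A^F$; the perfectness of $k$ is used only in the background setup and plays no role in this injectivity argument.
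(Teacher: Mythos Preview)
Your proof is correct and follows essentially the same route as the paper: the paper's Lemma~\ref{inclusion} constructs exactly your restriction-of-scalars map $\rho^E_F$ (called $\phi$ there), observes that the composite with $\eta^E_F$ is multiplication by the rank, and concludes that $(\eta^E_F)_\Q$ is injective; the passage to the colimit (Corollary~\ref{eta}) is then dispatched by citing a standard exercise in Lang, which you spell out explicitly. One small caveat: your formula $A^E = A^F \otimes_F E$ is literally valid only in the equicharacteristic setup of the introduction; in the general construction of Section~3 the rings $A^E$ are built differently, but $A^E$ is still finite free over $A^F$ (via Proposition~\ref{E-basic-2}), so your argument $N \otimes_{A^F} A^E \cong N^{\oplus d}$ as $A^F$-modules goes through unchanged.
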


Complete equi-characteristic Gorenstein local rings of finite representation type (with   $\charp A/\m \neq 2,3,5$) are precisely the ADE-singularities, see  \cite[9.8]{LW}. Furthermore in this case their AR-quiver is known and so their Grothendieck groups have been computed, see \cite[13.10]{Y}. As an easy consequence to our results we show that 
\begin{corollary}\label{intro-cor}
Let $(A,\m)$ be an excellent equi-characteristic Henselian Gorenstein local ring of finite representation type.
Assume $k$ is perfect.
Then
\begin{enumerate}[\rm (1)]
\item
If 
$\dim A$ is positive and even then $G(A)_\Q \cong \Q$.
\item 
If 
$\dim A$ is odd  then $ \dim_\Q G(A)_\Q \leq 3$.
\end{enumerate}
\end{corollary}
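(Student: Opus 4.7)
The strategy is to apply Proposition \ref{inj-intro} to reduce to the algebraically closed residue field case, identify the resulting ring as an ADE singularity, and then invoke Yoshino's explicit Grothendieck group computations.

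The first step is the reduction. Proposition \ref{inj-intro} with $F = k$ gives an injection $(\eta_k)_\Q \colon G(A)_\Q \hookrightarrow \lim_{E \in \C_k}G(A^E)_\Q$; composing with the isomorphism $\xi_\Q$ produces an injection $G(A)_\Q \hookrightarrow G(T)_\Q \cong G(\widehat{T})_\Q$. Since $k$ is perfect and $A$ is excellent Henselian Gorenstein of finite representation type, $\widehat{T}$ is a complete equi-characteristic Gorenstein local ring of finite representation type with algebraically closed residue field $\ov{k}$. Under the implicit hypothesis $\charp k \notin \{2,3,5\}$, \cite[9.8]{LW} identifies $\widehat{T}$ as an ADE hypersurface singularity.

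The second step is the Grothendieck group computation, which I would read off from \cite[13.10]{Y}. In dimension $2$ the AR-relations for each Kleinian singularity correspond to the extended Dynkin Cartan matrix, which has corank $1$; consequently $G(\widehat{T})_\Q \cong \Q$. Kn\"orrer periodicity preserves the stable category of MCM modules (and hence the rank of $G_\Q$) when the dimension increases by $2$, yielding $G(\widehat{T})_\Q \cong \Q$ in every positive even dimension. For odd dimensions one performs the analogous inspection starting from dimension $1$; the direct case-by-case computation in \cite[13.10]{Y} gives $\dim_\Q G(\widehat{T})_\Q \leq 3$ for each ADE type, propagated to all odd dimensions via Kn\"orrer periodicity.

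Case (1) requires one additional observation: $G(A)_\Q \neq 0$. This follows because for any minimal prime $\mathfrak{p}$ of $A$, the length at $\mathfrak{p}$ defines a homomorphism $G(A) \to \Z$ sending $[A]$ to a positive integer, hence nonzero. Combined with the injection into $\Q$ from step one, this forces $G(A)_\Q \cong \Q$. Case (2) is immediate from the embedding into a $\Q$-space of dimension at most $3$. The main obstacle will be the case-by-case verification in the second step, in particular establishing the bound $3$ in dimension $1$ across the five Dynkin types; this is however a mechanical check against Yoshino's tables, the conceptual work having been done by reducing to the algebraically closed setting via Proposition \ref{inj-intro}.
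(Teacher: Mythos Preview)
Your proposal is correct and follows essentially the same route as the paper: inject $G(A)_\Q$ into $G(\widehat{T})_\Q$ via Proposition~\ref{inj-intro} and Theorem~\ref{main-intro}, identify $\widehat{T}$ as an ADE singularity, and read off the dimension bounds from \cite[13.10]{Y}. The only minor difference is in showing $G(A)_\Q \neq 0$ in case~(1): the paper observes that $A$, being an isolated singularity of dimension $\geq 2$, is a normal local ring and hence a domain, so the rank map $G(A)\to\Z$ does the job; your length-at-a-minimal-prime argument is an equally valid (and slightly more general) alternative.
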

In fact in (1) we have $G(A)_\Q \cong G(\widehat{T})_\Q$. In section five we give an example which shows that in (2);
$G(A)_\Q$ can be a proper subspace of $ G(\widehat{T})_\Q$. The same example shows that we cannot in general 
compare torsion of $G(A)$ with torsion of $ G(\widehat{T})$.

We now describe in brief the contents of this paper. In section two we discuss some preliminaries on AR-sequences that we need. In section three we describe our construction. In section four we prove Theorem \ref{main-intro}, Proposition \ref{inj-intro} and Corollary \ref{intro-cor}. Finally in section five we discuss an example which shows that torsion does not behave well with our construction. 

\emph{Convention}: Throughout this paper all  rings are commutative Noetherian and all modules (unless stated otherwise) are finitely generated.
\section{Some preliminaries on Auslander-Reiten sequences}
In this section we discuss some preliminaries on Auslander-Reiten (AR) sequences that we need. The reference for this section is \cite[Chapter 2]{Y}.  Let $(A, \m)$ be a Henselian \CM \ local ring.

\s 
Let $M \in \CMa(A)$ be indecomposable.
We define a set of short exact sequences in $\CMa(A)$ as follows:
\[
\Sc(M) = \{ s \colon 0 \rt N_s \rt E_s \rt M \rt 0 \mid N_s \  \text{is indecomposable and} \ s \ \text{is non-split}\}.
\] 
If $M$ is non-free then $\Sc(M)$ is non-empty, \cite[2.2]{Y}. Define a partial order $>$ on $\Sc(M)$ as follows. Let $s,t \in \Sc(M)$. Then we say $s > t$ if there is
$f \in \Hom_A(N_s, N_t)$ such that $\Ext^1_A(M,f)(s) = t$. 
This is equivalent to the existence of 
 a commutative diagram:
\[
  \xymatrix
{
 0
\ar@{->}[r]
 & N_s
\ar@{->}[r]
\ar@{->}[d]^{f}
& E_s
\ar@{->}[r]
\ar@{->}[d]
&M
\ar@{->}[d]^{j}
\ar@{->}[r]
&0
\\
 0
\ar@{->}[r]
 & N_t
\ar@{->}[r]
& E_t
    \ar@{->}[r]
    &M
      \ar@{->}[r]
    &0  
\
 }
\]
where $j$ is the identity map. We write $s \sim t$ if $f$ is an isomorphism. 

\s \label{partial-prop} we have the following properties of $ > $ on $\Sc(M)$
\begin{enumerate}
\item
If $s > t $ and $t > l$  then $s > l$; (obvious).
\item
If $s > t $ and $t > s $  then $s \sim t$; see \cite[2.4]{Y}.
\item
If $s,t \in \Sc(M)$ then there exists $u \in \Sc(M)$ such that $s > u$ and $t > u$; see \cite[2.6]{Y}.
\end{enumerate}

By \ref{partial-prop} it follows that if there is a minimal element in $\Sc(M)$ then it is a minimal element in 
$\Sc(M)$ (upto isomorphism). 
\begin{definition}
An AR-sequence ending at $M$ is the unique minimal element of $\Sc(M)$ (if it exists).
\end{definition}
For a more concrete description of AR-sequences see \cite[2.9]{Y}. 

\s The following two results are basic. The first is \cite[3.4]{Y}.

\begin{theorem} Let $(A, \m)$ be a Henselian \CM \ local ring. Let $M \in \CMa(A)$ be  non-free and  indecomposable.  
The following two conditions are equivalent:
\begin{enumerate}[ \rm(i)]
\item
$M$ is locally free on the punctured spectrum of $A$.
\item
There exists an AR-sequence ending at $M$.
\end{enumerate}
\end{theorem}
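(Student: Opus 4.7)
The plan is to link the existence of an AR-sequence ending at $M$ to a finite length condition on the Ext-module $\Ext^1_A(M, \tau M)$, where $\tau M$ is the Auslander-Reiten translate of $M$. Recall that for $M \in \CMa(A)$ indecomposable non-free one constructs a canonical indecomposable MCM module $\tau M$ (the first syzygy of the Auslander transpose, dualized against the canonical module when $A$ is not regular), which satisfies the AR-duality formula
\[
\Ext^1_A(M, \tau N) \cong D\,\underline{\Hom}_A(N, M)
\]
for every $N \in \CMa(A)$; here $D = \Hom_A(-, E_A(k))$ is the Matlis dual and $\underline{\Hom}$ is the stable hom (modulo maps factoring through a free module). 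Specializing $N = M$ gives $\Ext^1_A(M, \tau M) \cong D\,\underline{\operatorname{End}}_A(M)$.

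For (i) $\Rightarrow$ (ii): When $M$ is locally free on the punctured spectrum, for every $\mathfrak{p} \neq \m$ one has $\Ext^1_A(M, \tau M)_\mathfrak{p} = \Ext^1_{A_\mathfrak{p}}(M_\mathfrak{p}, (\tau M)_\mathfrak{p}) = 0$, so the finitely generated module $\Ext^1_A(M, \tau M)$ has finite length and hence a non-zero socle. Pick a socle element $\alpha$ represented by a non-split short exact sequence
\[
s_0 \colon 0 \to \tau M \to E \to M \to 0.
\]
I would then verify that $s_0$ is the minimum of $\Sc(M)$: given any $s \colon 0 \to N_s \to E_s \to M \to 0$ in $\Sc(M)$, AR-duality applied to $N_s$ together with the fact that $\alpha$ is killed by $\m$ (so every element of $\underline{\Hom}_A(N_s, M)$ pairs with $\alpha$ via the canonical map) produces $f \in \Hom_A(N_s, \tau M)$ with $\Ext^1_A(M, f)(s) = \alpha$, i.e., $s > s_0$.

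For (ii) $\Rightarrow$ (i): I argue by contrapositive. If $M_\mathfrak{p}$ is non-free for some $\mathfrak{p} \neq \m$, then $\underline{\operatorname{End}}_{A_\mathfrak{p}}(M_\mathfrak{p}) \neq 0$, so $\underline{\operatorname{End}}_A(M)$ has infinite length, and by AR-duality the same holds for $\Ext^1_A(M, \tau M)$. An AR-sequence ending at $M$ would correspond, via AR-duality, to a single socle element of $\Ext^1_A(M, \tau M)$ that controls (through pushouts $\Ext^1_A(M, f)$) the class of every $s \in \Sc(M)$; this is incompatible with $\Ext^1_A(M, \tau M)$ having support outside $\{\m\}$, since a concrete non-split extension of $M$ obtained by lifting a non-split extension of $M_\mathfrak{p}$ gives an $s \in \Sc(M)$ whose class is non-zero at $\mathfrak{p}$ and so cannot dominate any candidate AR-sequence supported only at $\m$.

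The main obstacle is the direction (ii) $\Rightarrow$ (i). The partial order on $\Sc(M)$ is defined only for sequences with \emph{indecomposable} left terms, while the abelian group structure on $\Ext^1_A(M, \tau M)$ sees arbitrary extensions; reconciling these requires either a direct summand argument (extracting an indecomposable direct summand of the left term of a carefully glued non-split extension over $A$ that remains non-split at $\mathfrak{p}$) or else a finer analysis of how maps $f \colon N_s \to N_t$ translate across AR-duality. I expect that the cleanest route mirrors Yoshino's treatment: reduce everything to the statement that $\Sc(M)$ has a minimum if and only if $\Ext^1_A(M, \tau M)$ has a non-zero, simple socle, which in turn is equivalent to finite length, and so to $M$ being locally free on the punctured spectrum.
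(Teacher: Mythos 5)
The paper itself contains no proof of this statement; it simply cites Yoshino's book as \cite[3.4]{Y}, so there is no in-paper argument to compare against. Your outline for (i) $\Rightarrow$ (ii) is the standard Auslander--Reiten strategy, but it elides the two load-bearing steps. You need that the socle of $\Ext^1_A(M,\tau M)$ is \emph{simple}, not merely non-zero: this is where the Henselian hypothesis enters, since it makes $\operatorname{End}_A(M)$ local (as $M$ is indecomposable), hence the quotient $\underline{\operatorname{End}}_A(M)$ is local, hence its Matlis dual has simple socle. You also need $\tau M$ to be indecomposable so that the sequence associated to the socle element actually lies in $\Sc(M)$. Finally, the step ``every $s \in \Sc(M)$ dominates $s_0$'' is only asserted; it requires the functoriality of the duality isomorphism in the covariant variable, and as written the ``pairing'' sentence does not carry that out.

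The genuine gap is (ii) $\Rightarrow$ (i), which you essentially acknowledge. Two concrete problems. First, the duality $\Ext^1_A(M,\tau M) \cong D\,\underline{\operatorname{End}}_A(M)$ is proved (e.g.\ Yoshino 3.9--3.10) \emph{under the hypothesis that $M$ is locally free on the punctured spectrum}---that is what guarantees $\underline{\operatorname{End}}_A(M)$ has finite length so Matlis duality behaves---so invoking it to deduce (i) from (ii) is circular. Second, your concluding reduction, ``$\Ext^1_A(M,\tau M)$ has a non-zero simple socle if and only if it has finite length,'' is false for finitely generated modules in general: a finitely generated module can have depth zero (so a non-zero, even simple, socle) while having positive dimension and hence infinite length. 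The actual argument for (ii) $\Rightarrow$ (i) does not go through AR-duality at all; one uses the lifting property of the almost-split sequence $s_0 \colon 0 \to N \to E \to M \to 0$ to show that for every $a \in \m$ the non-automorphism $a\cdot 1_M$ factors through $E \to M$, whence $\m\, s_0 = 0$, and then a further direct argument (not just the observation that $s_0$ is a socle element) is needed to conclude that $M_{\mathfrak{p}}$ is free for every $\mathfrak{p} \neq \m$.
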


The second result is  \cite{A} (when $A$ is complete), \cite[4.22]{Y} (when $A$ has a canonical module) and \cite[Corollary 2]{HL} (in general).
\begin{theorem} Let $(A, \m)$ be a Henselian \CM \ local ring. If $A$ is of finite representation type then $A$ is an isolated singularity.
\end{theorem}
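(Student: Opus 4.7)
I would follow Auslander's endomorphism-ring strategy. Let $M_1 = A, M_2, \ldots, M_n$ be representatives of the isomorphism classes of indecomposable MCM $A$-modules, set $M = \bigoplus_i M_i$, and let $\Lambda = \Hom_A(M,M)^{\mathrm{op}}$. Because $A$ is Henselian and \CM, $\Lambda$ is a module-finite semiperfect $A$-algebra; its indecomposable finitely generated projective modules are $P_i = \Hom_A(M, M_i)$, with simple tops $S_i$. The plan has two steps: (i) show $\operatorname{gldim}(\Lambda) < \infty$; (ii) deduce that $A_\mathfrak{p}$ is regular for every non-maximal prime $\mathfrak{p}$.

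For step (i) the input is the existence of AR-sequences (the theorem preceding this statement). For each non-free $M_i$ that is locally free on the punctured spectrum, the AR-sequence $0 \rt N_i \rt E_i \rt M_i \rt 0$ produces, via $\Hom_A(M,-)$, a projective resolution $0 \rt P(N_i) \rt P(E_i) \rt P(M_i) \rt S_i \rt 0$, yielding $\projdim_\Lambda S_i \leq 2$. For the simple $S_1$ corresponding to the free summand $M_1 = A$, a separate resolution built from a system of parameters gives $\projdim_\Lambda S_1 \leq \dim A$. Together these provide a finite uniform bound on $\projdim_\Lambda S_i$, hence on $\operatorname{gldim}(\Lambda)$.

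For step (ii), localize at a non-maximal prime $\mathfrak{p}$. Since $A$ is a direct summand of $M$, the module $M_\mathfrak{p}$ has $A_\mathfrak{p}$ as a summand, so $M_\mathfrak{p}$ is a progenerator for $\md(A_\mathfrak{p})$. The Morita-type comparison then gives $\operatorname{gldim}(A_\mathfrak{p}) \leq \operatorname{gldim}(\Lambda_\mathfrak{p}) \leq \operatorname{gldim}(\Lambda) < \infty$, and the Auslander--Buchsbaum--Serre theorem forces $A_\mathfrak{p}$ to be regular.

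The main obstacle is a circularity hidden in step (i): AR-sequences are known to exist only for MCM modules that are locally free on the punctured spectrum, yet \emph{a priori} we do not know that every indecomposable $M_i$ has this property. The way to break this, used by Auslander in the complete case and extended by Huneke--Leuschke to the general Henselian setting, is to first establish local freeness on the punctured spectrum via a Harada--Sai length bound on chains of non-isomorphisms between indecomposable MCMs, and only then invoke the AR-sequence machinery. I would therefore prove local freeness as an auxiliary lemma, then run the above Auslander-algebra argument. The step from the complete case to the general Henselian setting can either be handled by descent (using that the map $A \rt \widehat{A}$ preserves finite \CM\ type and regularity under excellence) or by running the whole argument Henselian-locally in the style of Huneke--Leuschke.
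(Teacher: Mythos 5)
The paper does not prove this theorem; it cites Auslander for the complete case, Yoshino for the canonical-module case, and Huneke--Leuschke for the general Henselian case. Your sketch follows Auslander's Auslander-algebra strategy, and you are right to flag the circularity in step (i): one needs local freeness on the punctured spectrum to get AR-sequences in the first place. But look at what your fix actually amounts to. The auxiliary lemma you propose -- that every indecomposable MCM $A$-module is locally free on the punctured spectrum -- is, for a \CM\ local ring, \emph{equivalent} to $A$ being an isolated singularity. One direction is trivial (MCM over regular local is free). For the other, take $\mathfrak{p}$ non-maximal; by depth counting $\Omega^{d}(A/\mathfrak{p})$ with $d=\dim A$ is MCM, hence by the lemma locally free at $\mathfrak{p}$, so $k(\mathfrak{p})$ has finite projective dimension over $A_\mathfrak{p}$ and Auslander--Buchsbaum--Serre gives regularity. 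Thus once your auxiliary lemma is in hand there is nothing left to prove, and the entire Auslander-algebra apparatus of steps (i) and (ii) -- finite global dimension of $\Lambda$, projective resolutions of the simples $S_i$, Morita localization -- is redundant. Your outline is therefore not so much a proof as a reduction to a statement of equal strength; all the real content is compressed into the phrase "via a Harada--Sai length bound."

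Two smaller points. First, the Morita step in (ii) as written already presupposes the auxiliary lemma: for $M_\mathfrak{p}$ to be a \emph{progenerator} it must be projective over $A_\mathfrak{p}$, which is exactly local freeness of $M$ at $\mathfrak{p}$; being a direct summand only makes it a generator. Second, the attribution is a little off: Huneke--Leuschke, whom the paper cites for the general case, do not pass through the Auslander algebra or Harada--Sai at all. They give a direct argument -- starting from an indecomposable MCM $M$ that fails to be free at some non-maximal prime, they manufacture infinitely many pairwise non-isomorphic indecomposable MCM modules (a Miyata-type pushout/pullback construction along powers of an element), contradicting finite CM type -- and, notably, their argument does not need Henselianness; the existence of AR-sequences then falls out as a corollary rather than serving as an input.
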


\section{A construction}
In this section we describe a construction that is essential to us. This was constructed in \cite{Pu}.
\s \label{const} Let $(A,\m)$ be a Henselian local ring with perfect residue field $k$. Let $\ov{k}$ be the algebraic closure of $k$. Let  
$$\C_k = \{E \mid E \ \text{is a finite extension of } \ k, \ \text{and} \ E \subseteq \ov{k} \}.$$
Order $\C_k$ with the  inclusion as partial order. Note that $\C_k$ is a directed set, for if $E, F \in \C_k$ then the composite field $EF \in \C_k$ and clearly $EF \supseteq E$ and $EF \supseteq F$. We prove
\begin{theorem}
\label{basic-c}[ \cite[4.2]{Pu}](with hypotheses as in \ref{const})  There exists a direct system of local rings $\{ (A^E,\m^E) \mid E \in \C_k\}$ such that
\begin{enumerate}[\rm (1)]
\item
$A^E$ is a finite flat extension with $\m A^E = \m^E$. Furthermore $A^E/\m^E  \cong E$ over $k$.
\item
$A^E$ is Henselian.
\item
For any $F, E \in \C_k$ with $F \subseteq E$ the maps in the direct system $\theta^E_F \colon A^F \rt A^E$ is flat and local with $\m^F A^E = \m^E$.
\end{enumerate}
\end{theorem}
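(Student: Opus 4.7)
The plan is to construct $A^E$ explicitly via a primitive element. Since $k$ is perfect, any finite extension $E/k$ is separable, so by the primitive element theorem we may write $E = k(\alpha)$ with $\alpha$ having a separable minimal polynomial $f \in k[x]$. Choose a monic lift $\tilde f \in A[x]$ of $f$ and define $A^E := A[x]/(\tilde f)$. Then $A^E$ is a free $A$-module of rank $[E:k]$ (as $\tilde f$ is monic), and $A^E \otimes_A k \cong k[x]/(f(x)) = E$ is a field. To establish item (2) I would invoke the structural result that a finite algebra over a Henselian local ring is a finite product of local Henselian rings; since $A^E \otimes_A k$ is a field, this product has a single factor, so $A^E$ is itself local Henselian with maximal ideal $\m A^E$ and residue field $E$. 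This simultaneously yields the equality $\m A^E = \m^E$ and the residue-field identification required by item (1).

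Next I would construct the transition maps. Write $A^F = A[y]/(\tilde g)$ where $g \in k[x]$ is the minimal polynomial of a primitive element $\beta$ of $F$ and $\tilde g$ is a monic lift. The inclusion $F \subseteq E \subseteq \ov{k}$ picks out a distinguished element $\beta \in E$, which is a simple root of $g$ by separability. Since $A^E$ is Henselian with residue field $E$, Hensel's lemma supplies a unique $\hat\beta \in A^E$ with $\tilde g(\hat\beta) = 0$ and $\hat\beta \equiv \beta \pmod{\m^E}$. Define $\theta^E_F \colon A^F \rt A^E$ by $y \mapsto \hat\beta$; this map is local and sends $\m^F = \m A^F$ into $\m A^E = \m^E$. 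For the direct system axiom $\theta^E_F \circ \theta^F_K = \theta^E_K$ with $K \subseteq F \subseteq E$, observe that both composites send the canonical generator of $A^K$ to an element of $A^E$ which (i) is a root of the lifted minimal polynomial of $K$ and (ii) reduces to the fixed generator of $K$ inside $E$. The uniqueness clause in Hensel's lemma forces these two elements to coincide.

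It remains to verify that $\theta^E_F$ is flat. Since $A^E$ is finite over the local ring $A^F$, flatness is equivalent to freeness. Pick $e_1,\dots,e_n \in A^E$ (with $n = [E:F]$) whose residues in $E$ form an $F$-basis; Nakayama applied to $A^E/\m^F A^E = E$ shows the $e_i$ generate $A^E$ as an $A^F$-module. The kernel $K$ of the surjection $(A^F)^n \twoheadrightarrow A^E$ satisfies $K \otimes_{A^F} F = 0$ because tensoring the surjection with $F$ produces the isomorphism $F^n \cong E$; hence $K = \m^F K$, and a second Nakayama argument forces $K = 0$. Thus $A^E$ is $A^F$-free of rank $[E:F]$, which completes the verification. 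I expect the principal difficulty to be not any single computation but the systematic use of Hensel lifting to make the structural maps canonical, so that the resulting diagram is genuinely a direct system of rings rather than merely a diagram commuting up to isomorphism.
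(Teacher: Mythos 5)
Your proof is correct and takes essentially the same route as the paper. The construction of $A^E$ as $A[X]/(f_E(X))$, with $f_E$ a monic lift of the minimal polynomial of a primitive element, is exactly the paper's Construction C.1 (\ref{const-1}), and your Hensel-lifting construction of the transition maps $\theta^E_F$, together with the uniqueness argument for the cocycle condition, is the standard way to realize the maps the paper cites from \cite[4.5]{Pu} in Construction C.2 (\ref{const-2}).
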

The ring $T = \lim_{E\in \C_k} A^E$ will have nice properties.  
\s \label{const-1}\textbf{Construction-C.1:}  For every  $E \in \C_k$  we construct a ring $A^E$ as follows.  As $k$ is perfect, $E$ is a separable extension of $k$. So by primitive element theorem  $E = k(\alpha_E)$ for some $\alpha_E \in E$. Let 
\[
p_E(X) = p_{E,\alpha_E}(X) = \mbox{Irr}(\alpha_E, k),
\]
be the unique monic minimal polynomial of $\alpha_E$ over $k$. Let $f_E(X) = f_{E, \alpha_E}(X)$ be a monic polynomial in $A[X]$ such that $\ov{f_E(X)} = p_E(X)$.
Set
\[
A^E = \frac{A[X]}{(f_E(X))}.
\]
Our construction of course depends on choice of $\alpha_E$ and the choice of $f_E(X)$. We will simply fix one choice of $\alpha_E$ and $f_E(X)$.

\begin{remark}
If $A$ contains a field isomorphic  to $k$ then we can choose $A^E = A\otimes_k E$.  However note that in general, even if $A$ contains a field, it  need not contain a field isomorphic to $k$. 
\end{remark}

\s \label{const-2}\textbf{Construction-C.2:}
Let $k \subseteq F \subseteq E$ be a tower of fields.  In \cite[4.5]{Pu} we constructed a ring homomorphism $\theta^E_F \colon A^F \rt A^E$  such that the following holds:
\begin{proposition}\label{E-basic-2}\cite[4.6]{Pu}
(with hypotheses as in \ref{const-2})
\begin{enumerate}[\rm (i)]
\item
$\theta^E_F$ is a homomorphism of $A$-algebra's.
\item
$\theta^E_F$ is a local map
and
$\m^FA^E = \m^E$.
\item
$A^E$ is a flat $A^F$-module (via $\theta^E_F$).
\item
If $k \subseteq F \subseteq  E \subseteq L$ is a tower of fields then we have a commutative diagram 
\[
\xymatrix{ 
A^F
\ar@{->}[d]_{\theta^E_F} 
\ar@{->}[dr]^{\theta^L_F}
 \\ 
A^E
\ar@{->}[r]_{\theta^L_E} 
& A^L
}
\]
\end{enumerate}
\end{proposition}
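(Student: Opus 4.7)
The plan is to construct $\theta^E_F$ by using the Henselian property of $A^E$ to lift a simple root of $f_F$, and then deduce the four properties. For the construction, note that since $k$ is perfect, the minimal polynomial $p_F(Z)$ is separable, so $\alpha_F$ is a simple root. As $F \sub E$, $\alpha_F$ lies in $E$, and hence in $A^E/\m^E = E$ the reduction $\ov{f_F}(Z) = p_F(Z)$ factors as $(Z - \alpha_F) \cdot h(Z)$ with $h(\alpha_F) \neq 0$. Since $A^E$ is Henselian, this coprime factorization lifts uniquely to $f_F(Z) = (Z - \wt{\alpha}) \cdot \wt{h}(Z)$ in $A^E[Z]$ with $\wt{\alpha}$ reducing to $\alpha_F$. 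I define $\theta^E_F \colon A^F = A[X]/(f_F(X)) \rt A^E$ as the identity on $A$ together with $X \mapsto \wt{\alpha}$; this is well-defined because $f_F(\wt{\alpha}) = 0$.

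Part (i) is then immediate. For (ii), $A^F/\m A^F = k[X]/(p_F(X)) = F$ is a field, so $\m^F = \m A^F$; hence $\theta^E_F(\m^F) = \m \sub \m^E$, so $\theta^E_F$ is local, and $\m^F A^E = \m A^E = \m^E$. For (iv), given $k \sub F \sub E \sub L$, the element $\theta^L_E(\wt{\alpha}) \in A^L$ is a root of $f_F$ whose image in $A^L/\m^L = L$ is $\alpha_F$ (since $\theta^L_E$ is local). Because $\alpha_F$ is a simple root of $p_F$, the uniqueness in Hensel's lemma forces $\theta^L_E(\wt{\alpha})$ to coincide with the lift defining $\theta^L_F(X)$; thus $\theta^L_E \circ \theta^E_F$ and $\theta^L_F$ are $A$-algebra maps agreeing on the generator $X$ and hence everywhere.

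For (iii), I apply the local criterion for flatness: since $A^E$ is a finite $A^F$-module via $\theta^E_F$, it suffices to show $\operatorname{Tor}_1^{A^F}(F, A^E) = 0$. Base change along the flat map $A \rt A^F$ yields
\[
\operatorname{Tor}_1^{A^F}(F, A^E) \;=\; \operatorname{Tor}_1^{A^F}\bigl((A/\m) \otimes_A A^F,\ A^E\bigr) \;\cong\; \operatorname{Tor}_1^A(A/\m, A^E),
\]
and the right hand side vanishes because $A^E = A[Y]/(f_E(Y))$ is free, hence flat, over $A$.

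The main obstacle is the construction step itself: producing a canonical homomorphism rigid enough to satisfy the tower compatibility in (iv). The uniqueness clause in Hensel's lemma applied to a simple factor is precisely what provides this rigidity; without the Henselian hypothesis on $A$ (and therefore on $A^E$) there would be no canonical way to lift $\alpha_F$ from $E$ to $A^E$, and compatibility across towers of fields would break down.
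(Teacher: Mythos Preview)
The paper does not give an independent proof of this proposition; both the construction of $\theta^E_F$ and the verification of properties (i)--(iv) are deferred to the author's earlier paper \cite[4.5, 4.6]{Pu}. Your argument supplies precisely the details that are omitted here, and it is correct: the construction via Hensel's lemma (lifting the simple root $\alpha_F$ of $p_F$ from $E = A^E/\m^E$ to the Henselian ring $A^E$), the appeal to the uniqueness clause for the tower compatibility in (iv), and the base-change Tor computation for flatness in (iii) are all sound. This is the natural route and almost certainly coincides with what is done in the cited reference.
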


\s \label{const-3} \textbf{Construction-C.3:} By \ref{E-basic-2} we have a directed system of rings 
$\{ A^E \}_{E \in \C_k}$.
Set 
\[
T = \lim_{E \in \C_k} A^E,
\]
and let $\theta_E \colon E \rt T$ be the maps such that for any $F\subseteq E$ in $\C_k$ we have $\theta_E \circ \theta^E_F = \theta_F$.
For $F \in \C_k$ set
$$ \C_F = \{E \mid E \ \text{is a finite extension of } \ F \}.$$
Then clearly $\C_F$ is cofinal in $\C_k$. Thus we have
\[
T = \lim_{E \in \C_F} A^E.
\]
We have the following properties of $T$.
\begin{theorem}\label{prop-T}[See \cite[4.8]{Pu}]
(with hypotheses as in \ref{const-3})
\begin{enumerate}[\rm (i)]
\item
$T$ is a Noetherian ring.
\item
$T$ is a flat $A$-module. 
\item
$T$ is a flat $A^F$-module for any $F \in \C_k$.
\item
The map $\theta_E$ is injective for any $E \in \C_k$. 
\item
By (iv)
we may write $T = \bigcup_{E \in \C_k}A^E$. Set $\m^T = \bigcup_{E \in \C_k}\m^E$.
Then $\m^T$ is the unique maximal ideal of $T$. 
\item
 $\m T = \m^T$.
\item
$T/\m^T \cong \ov{k}$.
\item
$T$ is a Henselian ring.
\end{enumerate}
\end{theorem}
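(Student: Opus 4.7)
The plan is to establish the eight properties in the order (iv), (ii)/(iii), (v)/(vi)/(vii), (viii), and finally Noetherianity (i), which will be the main obstacle. For (iv): each transition map $\theta^L_E \colon A^E \to A^L$ is local and flat by Theorem \ref{basic-c}, hence faithfully flat and therefore injective; since injections are preserved by filtered colimits, each $\theta_E \colon A^E \to T$ is injective. For (ii) and (iii): a filtered colimit of flat modules is flat, so $T$ is $A$-flat because each $A^E$ is; for (iii), use that $\C_F$ is cofinal in $\C_k$ and each $A^E$ with $E \supseteq F$ is $A^F$-flat by Proposition \ref{E-basic-2}.

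Using (iv), I identify $T$ with $\bigcup_E A^E$. The locality of the transition maps together with $\m^F A^E = \m^E$ makes $\m^T = \bigcup_E \m^E$ an ideal of $T$, and any element outside $\m^T$ lies in some $A^E \setminus \m^E$ and is thus a unit in $A^E \subseteq T$; this gives (v). Part (vi) follows from $\m A^E = \m^E$ for every $E$, and (vii) is the union identity $T/\m^T = \lim_{E \in \C_k} E = \ov{k}$. For Henselianity (viii), given a monic $f \in T[X]$ with a coprime factorization $\ov{f} = \ov{g}\,\ov{h}$ in $\ov{k}[X]$, the coefficients of $f$ and the (finitely many) coefficients of $\ov{g}, \ov{h}$ all lie in some $A^E$ after enlarging $E$ so that $E$ contains all needed residues; then Hensel's lemma for the Henselian ring $A^E$ lifts the factorization inside $A^E[X] \subseteq T[X]$.

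The main obstacle is Noetherianity (i); a filtered colimit of Noetherian rings is not Noetherian in general. My plan is to compute the associated graded ring: by $A$-flatness of $T$ and $\m T = \m^T$, one has
\[
\operatorname{gr}_{\m^T}(T) = \bigoplus_{i \geq 0} \m^i T / \m^{i+1} T \cong \bigoplus_{i \geq 0} (\m^i/\m^{i+1}) \otimes_k \ov{k} \cong \operatorname{gr}_\m(A) \otimes_k \ov{k},
\]
which is a finitely generated $\ov{k}$-algebra, hence Noetherian. I would then verify that $T$ is $\m^T$-adically separated (using Krull's intersection theorem on each Noetherian $A^E$ together with injectivity of $\theta_E$) and deduce that the $\m^T$-adic completion $\widehat{T}$ is Noetherian from its Noetherian associated graded. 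The final delicate step is descending Noetherianity from $\widehat{T}$ back to $T$, exploiting the explicit description of $T$ as the directed union of the finite flat local extensions $A^E$ together with faithful flatness of $T \to \widehat{T}$; this descent is the main technical content of the cited result \cite[4.8]{Pu}, and controlling how ideals of $T$ interact with the layers $A^E$ is the hardest part.
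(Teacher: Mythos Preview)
The paper does not give its own proof of this theorem; it simply records the statement and cites \cite[4.8]{Pu}. So there is no in-paper argument to compare against, and your write-up already goes well beyond what the paper provides.

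Your arguments for (ii)--(viii) are correct and standard: injectivity of the $\theta_E$ from faithful flatness of the transition maps, flatness of $T$ as a filtered colimit of flat modules (using cofinality of $\C_F$ for (iii)), the elementary verification of (v)--(vii), and the lifting-to-some-$A^E$ argument for Henselianity are all exactly what one would do.

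For (i) there is a genuine circularity in the strategy you sketch. You propose to show $\widehat{T}$ is Noetherian (correct, via the Noetherian associated graded $\operatorname{gr}_{\m^T}(T)\cong \operatorname{gr}_{\m}(A)\otimes_k \ov{k}$ and completeness) and then \emph{descend} Noetherianity along a faithfully flat map $T\to\widehat{T}$. The problem is that flatness of the completion map is itself typically proved using Noetherianity of the base; for a general local ring with finitely generated maximal ideal the $\m$-adic completion need not be flat, so you cannot invoke faithful flatness of $T\to\widehat{T}$ before knowing $T$ is Noetherian. Since you explicitly defer this step to \cite[4.8]{Pu}, your treatment is in the end no less complete than the paper's, but the outlined route does not close on its own.

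A cleaner way to organize (i), avoiding the circularity, is to note that each $A\to A^E$ is finite, flat, local with $\m A^E=\m^E$ and separable residue extension, hence \'etale; since $A$ is already Henselian, the colimit $T=\varinjlim_E A^E$ is precisely the strict Henselization $A^{sh}$. Noetherianity of the (strict) Henselization of a Noetherian local ring is classical (e.g.\ EGA~IV$_4$, 18.6.6 and 18.8.8), and that is the substantive content behind the citation.
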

The following result is definitely known to experts. We give a proof for the convenience of the reader.
\begin{lemma}
 \label{excellent}
 If $A$ is excellent then 
 \begin{enumerate}[\rm (1)]
 \item
 $A^E$ is excellent  for all $E \in \C_k$.
 \item
  $T = \lim_{E \in \C_k} A^E$ is excellent.
 \end{enumerate}
 \end{lemma}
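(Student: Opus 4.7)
My plan is to dispatch (1) using standard permanence properties of excellence, and to handle (2) by recognizing $T$ as the strict Henselization $A^{\mathrm{sh}}$ of $A$ and then invoking the theorem that strict Henselization preserves excellence.

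For (1), note that $A^E = A[X]/(f_E(X))$ is a finite $A$-algebra since $f_E$ is monic. Finitely generated algebras over an excellent ring are excellent (see Matsumura, \emph{Commutative Ring Theory}, Theorem 32.4), so $A^E$ is excellent; no further argument is needed.

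For (2), I would first verify that each transition map $A \to A^E$ is étale. By Proposition \ref{E-basic-2}, $A^E$ is local, finite, and flat over $A$, with $\m A^E = \m^E$ and residue field extension $k \subseteq E$. Since $k$ is perfect, this extension is separable, so $A \to A^E$ is unramified at $\m^E$; together with flatness this makes it étale. Consequently $T = \lim_{E \in \C_k} A^E$ is a filtered colimit of pointed étale extensions of the Henselian local ring $A$, is itself Henselian local by Theorem \ref{prop-T}(viii), and has residue field $\ov{k}$, the separable closure of $k$. These are precisely the defining properties of the strict Henselization, and since $A = A^h$ one obtains a canonical isomorphism $T \cong A^{\mathrm{sh}}$. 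Cofinality of our indexing family among all pointed étale $A$-algebras follows from the fact that, over a Henselian base, every such algebra is of the form $A[X]/(g(X))$ for a monic lift $g$ of the minimal polynomial of its residue generator.

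With this identification in hand, I would conclude by invoking the theorem that the strict Henselization of an excellent local ring is excellent (see, e.g., Stacks Project, Tag 0AH2, based on work of Greco and Rotthaus). The main obstacle in this plan is the rigorous identification $T \cong A^{\mathrm{sh}}$, including the cofinality check just mentioned; once that is in place, excellence of $T$ is immediate from the cited theorem.
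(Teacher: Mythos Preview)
Your proof is correct, and part (1) matches the paper exactly. For part (2), however, the paper takes a shorter route: it simply observes that each transition map $A^F \to A^E$ is \'etale and then directly invokes Greco's theorem \cite[5.3]{G}, which says that a filtered colimit along \'etale maps of excellent rings is excellent. Your approach inserts an extra step---identifying $T$ with the strict Henselization $A^{\mathrm{sh}}$---and then cites the excellence of $A^{\mathrm{sh}}$. The identification is valid (since $A$ is already Henselian and $k$ is perfect, the connected finite \'etale $A$-algebras with chosen residue embedding into $\ov{k}$ are indeed cofinal among pointed \'etale neighborhoods), but it is unnecessary: the result you invoke from the Stacks Project is itself proved by the same Greco-type colimit argument the paper cites directly. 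Your detour does buy the conceptual identification $T \cong A^{\mathrm{sh}}$, which could be useful in other contexts; the paper's argument is more economical and sidesteps the cofinality verification you flag as the ``main obstacle.''
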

\begin{proof}
(1) We have $A^E = A[X]/(f_E(X))$. So $A^E$ is excellent.

(2) In the directed system $\{ A^E \}_{E \in \C_k}$ each map $A^F \rt A^E$ (when $F\subseteq E$) is etale. 
 So by a result of \cite[5.3]{G} it follows that $T$ is excellent.
\end{proof}

The significance of $T$ is that certain crucial properties descend to a finite extension $E$ of $k$,  see
\cite[4.9]{Pu}.
\begin{lemma}\label{l-p-t}
(with hypotheses as above) 
\begin{enumerate}[\rm (1)]
\item
Let $M$ be a  $T$-module. Then there exists $E \in \C_k$ and an  $A^E$-module $N$ such that $M = N\otimes_{A^E} T$.
\item
Let $N_1,N_2$ be $A^E$-modules for some $E \in \C_k$. Suppose there is a $T$-linear map $f \colon N_1\otimes_{A^E} T \rt N_2 \otimes_{A^E} T$. Then there exists $K \in \C_k$ with $K \supseteq  E$ and an $A^K$-linear map $g \colon N_1\otimes_{A^E} A^K \rt N_2 \otimes_{A^E} A^K$ such that $f = g\otimes T$. Furthermore if $f$ is an isomorphism then so is $g$.
\end{enumerate}
\end{lemma}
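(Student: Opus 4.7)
The plan is to exploit the fact that $T = \bigcup_{E \in \C_k} A^E$, so any finite amount of data with values in $T$ already lies inside some $A^E$, and by Theorem~\ref{prop-T}(iv) each $A^E \hookrightarrow T$, so any polynomial identity in $T$ among such elements is already an identity in $A^E$. Combined with flatness of $T$ over the $A^E$ (Theorem~\ref{prop-T}(iii)), this will let us descend modules, morphisms, and the relations between them.

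For (1), since $T$ is Noetherian by Theorem~\ref{prop-T}(i) and $M$ is finitely generated, I would choose a finite free presentation $T^q \xrightarrow{\Phi} T^p \to M \to 0$. The matrix of $\Phi$ has only finitely many entries, so some $E \in \C_k$ contains all of them, giving a map $\Phi^E \colon (A^E)^q \to (A^E)^p$. Set $N = \operatorname{coker}(\Phi^E)$. Since $T$ is flat over $A^E$, tensoring the presentation of $N$ with $T$ recovers the presentation of $M$ and yields $N \otimes_{A^E} T \cong M$.

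For (2), fix finite presentations $(A^E)^{q_i} \xrightarrow{d_i} (A^E)^{p_i} \xrightarrow{\pi_i} N_i \to 0$ for $i=1,2$ and base change to $T$, giving presentations of $M_i := N_i \otimes_{A^E} T$. Lift $f$ to a $T$-linear map $F \colon T^{p_1} \to T^{p_2}$ by lifting the image of each basis vector, so $\pi_2 F = f \pi_1$. Then $\pi_2 F d_1 = f \pi_1 d_1 = 0$, hence $F d_1$ lands in $\ker \pi_2 = \image d_2$, and I may pick a $T$-linear $H \colon T^{q_1} \to T^{q_2}$ with $d_2 H = F d_1$. Choose $K \in \C_k$ containing $E$ large enough that every matrix entry of $F$ and $H$ lies in $A^K$. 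The equation $d_2 H = F d_1$ holds in $T$ among elements of $A^K$, hence already in $A^K$ by injectivity of $A^K \hookrightarrow T$. Therefore $F$, viewed over $A^K$, carries $\image d_1$ into $\image d_2$ and descends to $g \colon N_1 \otimes_{A^E} A^K \to N_2 \otimes_{A^E} A^K$ with $g \otimes_{A^K} T = f$ by construction.

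For the isomorphism clause, $A^K \to T$ is a flat local homomorphism of Noetherian local rings (locality follows from $\m^K T = \m T = \m^T$, using Theorem~\ref{basic-c}(1) and Theorem~\ref{prop-T}(vi)), hence faithfully flat. If $f = g \otimes_{A^K} T$ is an isomorphism, then by flatness $\ker g$ and $\operatorname{coker} g$ are finitely generated $A^K$-modules whose base change to $T$ vanishes, and by faithful flatness they themselves vanish, so $g$ is an isomorphism. The one mildly subtle point is ensuring that not just $F$ but also the compatibility relation $F d_1 = d_2 H$ descends to $A^K$; this is precisely the purpose of introducing the auxiliary lift $H$, after which everything else is a straightforward application of flatness and filtered-union bookkeeping.
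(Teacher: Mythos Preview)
Your proof is correct. Note, however, that the paper itself does not prove this lemma: it is stated with a reference to \cite[4.9]{Pu}, so there is no in-paper argument to compare against directly. That said, the approach you give---writing down a finite presentation, observing that the finitely many matrix entries of $\Phi$, $F$, $H$ must lie in some $A^K$ because $T = \bigcup_E A^E$, using injectivity of $A^K \hookrightarrow T$ to descend the relation $d_2 H = F d_1$, and then invoking faithful flatness of $A^K \to T$ for the isomorphism clause---is the canonical descent argument for filtered colimits of flat ring maps and is precisely the method used in \cite{Pu}. Your handling of the auxiliary lift $H$ to ensure compatibility of $F$ with the presentations is exactly the right bookkeeping.
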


We now relate finite representation property of our construction.
\begin{lemma}
 \label{et-fin-rep}
 Assume  $A$ is \CM, 
 excellent and of finite representation type. Then
 \begin{enumerate}[\rm (1)]
 \item
 $A^E$  is \CM \ of finite representation type for each $E \in \C_k$.
 \item
 $T = \lim_{E \in \C_k} A^E$ is \CM \ of finite representation type.
 \item
$\widehat{T}$,  the $\m^T$ completion of $T$, is \CM \ of finite representation type.
\item
If $A$ is Gorenstein then $A^E$  is Gorenstein for each $E \in \C_k$. Furthermore $T$, $\widehat{T}$ are  Gorenstein.
 \end{enumerate}
\end{lemma}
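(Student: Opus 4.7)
For part (1), I exploit that $A^E = A[X]/(f_E(X))$ is a finite \'etale $A$-algebra: since $k$ is perfect, $p_E = \ov{f_E}$ is separable, so the discriminant of $f_E$ reduces to the nonzero discriminant of $p_E$ and is therefore a unit in $A$. The resulting separability idempotent $e \in A^E \otimes_A A^E$ yields, for every $A^E$-module $N$, an $A^E$-linear splitting of the multiplication map $N|_A \otimes_A A^E \rt N$. Thus every indecomposable MCM $N$ over $A^E$ is a direct summand of $N|_A \otimes_A A^E$; since $A \rt A^E$ is finite free, $N|_A$ is MCM over $A$ and so decomposes as a finite sum of the indecomposables $M_1,\dots,M_s$ of $A$. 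Krull--Schmidt over the Henselian ring $A^E$ then exhibits $N$ as one of the finitely many indecomposable summands of the modules $M_i \otimes_A A^E$.

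For part (2), I first extract a bound on $|\Ic_{A^F}|$ that is uniform in $F \in \C_k$. Taking $E_0 = k$ (so $A^{E_0} \cong A$) and applying the separability argument above to $A \rt A^F$, every indecomposable MCM over $A^F$ is a Krull--Schmidt summand of some $M_i \otimes_A A^F$; the number of distinct such summands is at most $\mu_{A^F}(M_i \otimes_A A^F) = \mu_A(M_i)$ (using $\m A^F = \m^F$ and flatness from Proposition \ref{E-basic-2}). Hence $B := \sum_i \mu_A(M_i)$ bounds $|\Ic_{A^F}|$ independently of $F$. To pass to $T$, take an indecomposable MCM $L$ over $T$ and apply Lemma \ref{l-p-t}(1): $L = N \otimes_{A^E} T$ for some (necessarily MCM) $A^E$-module $N$. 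If $N = \bigoplus N_j$ over $A^E$, then $L = \bigoplus N_j \otimes T$; Krull--Schmidt over the Henselian ring $T$ combined with faithful flatness of $A^E \rt T$ forces $N$ itself to be indecomposable over $A^E$. Now suppose for contradiction that $L_1,\dots,L_{B+1}$ are pairwise non-isomorphic indecomposable MCM $T$-modules, descending to indecomposable $N_j$ over $A^{E_j}$. Letting $F^*$ be the composite of the $E_j$, each $N_j \otimes A^{F^*}$ is MCM and remains indecomposable (the same faithful flatness argument applied now to $A^{F^*} \rt T$), and by Lemma \ref{l-p-t}(2) the $N_j \otimes A^{F^*}$ are pairwise non-isomorphic; this gives $B+1$ indecomposable MCMs over $A^{F^*}$, contradicting the uniform bound.

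For (3) and (4): $\widehat T$ is CM and Henselian (completion preserves both), and since $T$ is excellent by Lemma \ref{excellent}, completion induces a bijection on iso classes of indecomposable MCM modules between $T$ and $\widehat T$ (see e.g.\ \cite[Ch.\ 10]{LW}), so (3) follows from (2). For (4), both $A \rt A^E$ and $A \rt T$ are flat local extensions whose closed fibres ($E$ and $\ov k$ respectively) are fields, and Gorensteinness ascends along any such map by the standard criterion in \cite{Mat}; completion then preserves Gorensteinness via the usual $\Ext$-characterization, giving $\widehat T$ Gorenstein as well.

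The principal obstacle is the bookkeeping in (2): extracting a bound on $|\Ic_{A^F}|$ that is \emph{uniform} in $F$, and then ensuring indecomposability of the chosen summands persists under further base change. Both points rest on the faithful flatness of $A^F \rt T$ (so that Krull--Schmidt over the Henselian $T$ descends meaningfully) and on the equalities $\m A^E = \m^E$ and $\m^F A^{F^*} = \m^{F^*}$ from Proposition \ref{E-basic-2}, which keep the invariant $\mu_{A^F}$ constant along the tower.
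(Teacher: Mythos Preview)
Your argument is correct, and it differs substantially from the paper's. The paper's proof is essentially a string of citations: the \CM\ and Gorenstein assertions are read off from the flat local extension criterion in \cite{Mat} (using $\m A^E = \m^E$ and $\m T = \m^T$), while the finite representation type assertions in (1) and (2) are deferred to \cite[10.7]{LW}, and (3) to \cite[10.10]{LW} together with Lemma \ref{excellent}.

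By contrast, you give a direct proof. For (1) you use the separability idempotent of the \'etale extension $A \rt A^E$ to split $N$ off from $N|_A \otimes_A A^E$, reducing to Krull--Schmidt summands of the finitely many $M_i \otimes_A A^E$. For (2) you extract the uniform bound $|\Ic_{A^F}| \leq \sum_i \mu_A(M_i)$ (via $\m A^F = \m^F$), then descend an overabundance of indecomposable MCM $T$-modules to a single $A^{F^*}$ via Lemma \ref{l-p-t} and faithful flatness. This is more work than the paper's one-line citation, but it is self-contained (modulo standard commutative algebra and the descent lemma already proved in the paper), and the explicit uniform bound is a pleasant byproduct that the reference does not hand you directly. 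Your treatment of (3) and (4) matches the paper's in spirit: excellence of $T$ plus the completion bijection on MCM modules from \cite{LW}, and the flat-local ascent criterion for Gorensteinness from \cite{Mat}.
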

\begin{proof}
We first note that as $\m A^E = \m^E$. So $A^E$ is \CM, see \cite[Corollary, p.\ 181]{Mat}. Furthermore if $A$ is Gorenstein then so is $A^E$, see \cite[23.4]{Mat}.  Similarly as $\m T = \m^T$ we get 
$T$ is \CM \ (and is Gorenstein if $A$ is). So $\widehat{T}$ is also \CM \ (and is Gorenstein if $T$ is).

For (1), (2) see \cite[10.7]{LW}. For (3) use \ref{excellent} and \cite[10.10]{LW}.
\end{proof}

The following results on comparing AR-sequences is crucial for us.
\begin{lemma}\label{AR-compare}
 Let the setup be as in Lemma \ref{et-fin-rep}. Let $M^T$  be an indecomposable MCM $T$-module and let
$s^T \colon 0 \rt N^T \rt L^T \rt M^T \rt 0$ be an AR-sequence ending at $M^T$. By \ref{l-p-t} there exists $E \in \C_k$ and MCM Modules $A^E$-modules $M^E, N^E, L^E$ such that
\begin{enumerate}[\rm (1)]
\item
$M^E \otimes_{A^E} T = M^T$, $N^E \otimes_{A^E} T = N^T$ and $L^E \otimes_{A^E} T = L^T$.
\item
A short exact sequence, $s^E \colon 0 \rt N^E \rt L^E \rt M^E \rt 0$, of $A^E$-modules  such that $s^E \otimes_{A^E} T = s^T$.
\end{enumerate}
Then 
\begin{enumerate}[\rm (a)]
\item 
$s^E$ is the AR-sequence in $A^E$ ending at $M^E$.
\item
If $E \subseteq F$ then $s^F = s^E \otimes_{A^E} A^F$ is the AR-sequence in $A^F$ ending at 
$M^F = M^E \otimes_{A^E} A^F$.
\end{enumerate}
\end{lemma}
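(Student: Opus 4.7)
The essential input is that each homomorphism $A^F \to T$ ($F \in \C_k$) is flat and local with $\m^F T = \m^T$ by \ref{prop-T}, hence faithfully flat. Routine faithful-flatness arguments yield the preliminaries: $M^E$ and $N^E$ are indecomposable (any nontrivial decomposition would survive to $M^T$ or $N^T$), $M^E$ is non-free (otherwise $M^T$ would be free, contradicting that it ends an AR-sequence), and $s^E$ is non-split. So $s^E \in \Sc(M^E)$. By \ref{et-fin-rep}(1), $A^E$ is \CM \ of finite representation type, hence an isolated singularity by the second theorem of \S2, so $M^E$ is locally free on the punctured spectrum; the first theorem of \S2 then yields an AR-sequence $t^E \colon 0 \to K^E \to J^E \to M^E \to 0$ over $A^E$.

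By minimality of $t^E$ in $\Sc(M^E)$ we obtain $f^E \colon N^E \to K^E$ with $(f^E)_*[s^E] = [t^E]$. Tensoring with $T$ gives $(f^T)_*[s^T] = [t^T]$, where $t^T := t^E \otimes_{A^E} T$ is non-split. Since $L^T \to M^T$ in $s^T$ is right almost split and $J^T \to M^T$ is not a split epimorphism, the latter factors through the former, inducing $g \colon K^T \to N^T$ with $g_*[t^T] = [s^T]$. Composing, $(g f^T)_*[s^T] = [s^T] \neq 0$. Now the long exact sequence $\Hom(L^T, N^T) \to \Hom(N^T, N^T) \xrightarrow{\delta} \Ext^1(M^T, N^T)$ attached to $s^T$, together with the left-almost-split property of $N^T \to L^T$ (every non-isomorphism $\phi \in \operatorname{End}(N^T)$ is a non-split monomorphism by indecomposability of $N^T$, hence factors through $N^T \to L^T$), shows $\delta(\phi) = \phi_*[s^T] = 0$ for every such $\phi$. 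Consequently $g f^T$ is an automorphism of $N^T$, and $f^T$ is a split monomorphism.

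To descend to $A^E$, consider $\sigma \colon 0 \to N^E \xrightarrow{f^E} K^E \to \operatorname{coker} f^E \to 0$ (exact since $\ker f^E \otimes_{A^E} T = \ker f^T = 0$ and $T$ is faithfully flat over $A^E$). Because $\sigma \otimes_{A^E} T$ splits, its class in $\Ext^1_{A^E}(\operatorname{coker} f^E, N^E) \otimes_{A^E} T \cong \Ext^1_T(\operatorname{coker} f^T, N^T)$ vanishes, and faithful flatness forces $[\sigma] = 0$. Thus $K^E \cong N^E \oplus \operatorname{coker} f^E$; since $K^E$ is indecomposable and $N^E \neq 0$, we conclude $\operatorname{coker} f^E = 0$, so $f^E$ is an isomorphism and $s^E$ is the AR-sequence ending at $M^E$, proving (a). Part (b) follows by applying the identical argument with $F$ in place of $E$: $s^F \otimes_{A^F} T = s^T$ exhibits $s^F$ as an $A^F$-descent of the AR-sequence $s^T$, and the hypotheses on $F$ are the same as those that made the argument work for $E$. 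I expect the main obstacle to be cleanly identifying $g f^T$ as an automorphism via the AR-theoretic long-exact-sequence argument; the subsequent descent along the faithfully flat map $A^E \to T$ is then routine.
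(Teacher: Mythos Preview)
Your proof is correct and follows the same strategy as the paper: take the actual AR-sequence $t^E$ over $A^E$, obtain $f^E \colon N^E \to K^E$ from minimality, base-change to $T$, argue that $f^T$ is (close to) an isomorphism, and descend by faithful flatness. The paper compresses the middle step by working inside the partially ordered set $\Sc(M^T)$: from $s^E > t^E$ it gets $s^T > t^E \otimes T$, and since $s^T$ is minimal also $t^E \otimes T > s^T$, whence $s^T \sim t^E \otimes T$ by \ref{partial-prop}(2); faithful flatness then yields $s^E \sim t^E$. You instead unwind this, invoking the right- and left-almost-split properties of $s^T$ directly to see that $gf^T$ is a unit in $\operatorname{End}(N^T)$, and then finishing with the $\Ext^1$/indecomposability descent on $K^E$. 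Your version has the minor advantage that it never needs $K^E \otimes_{A^E} T$ to be indecomposable (i.e.\ $t^E \otimes T \in \Sc(M^T)$), a point the paper's appeal to \ref{partial-prop}(2) leaves implicit. One phrasing correction: a non-isomorphism $\phi \in \operatorname{End}(N^T)$ need not be a monomorphism; what you need, and what suffices for the left-almost-split property, is merely that such a $\phi$ is not a \emph{split} monomorphism, which does follow from indecomposability of $N^T$.
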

\begin{proof}
(a) As $N^T, M^T$ are indecomposable we get $N^E, M^E$ are indecomposable. Let $\beta$ be an AR-sequence in $A^E$ 
ending at $M^E$. Then $s^E > \beta$. So $s^T = s^E \otimes T > \beta \otimes T $. But $s^T$ is the AR-sequence ending at $M^T$. So $\beta\otimes T > s^T$. Therefore $s^E \otimes T \sim \beta \otimes T$ (see \ref{partial-prop}(2)). As $T$ is a faithfully flat $A^E$-algebra we get that $s^E \sim \beta$. The result follows.

(b) Note
\[
s^F \otimes_{A^F} T = (s^E \otimes_{A^E} A^F)\otimes_{A^F} T \cong s^T.
\]
The result follows from (a). 
\end{proof}

\section{Proof of our main result \ref{main-intro}}
In this section we prove our main result. We require several preparatory results to prove it.
Throughout this section $(A,\m)$ is an excellent \CM \ local ring of finite representation type with $k = A/\m$ perfect. Fix an algebraic closure $\ov{k}$ of $k$. Let  
$$\C_k = \{E \mid E \ \text{is a finite extension of } \ k, \ \text{and} \ E \subseteq \ov{k} \}.$$
For $E \in \C_k$ let $A^E$ be as in \ref{const-2}. If $k \subseteq F \subseteq E$ let $\theta^E_F \colon  A^F \rt A^E$ be as in \ref{const-2}. As discussed above $\{ A^E \mid  E \in \C_k \}$ forms a direct system of rings.
As before set $T = \lim_{E \in \C_k} A^E$. By \ref{et-fin-rep} we get that $A^E$ has finite representation type for each $E \in \C_k$. Furthermore $T$ and $\widehat{T}$ also have finite representation type.

\s \label{const-K.1}\textbf{Construction-K.1:} Let $k \subseteq F \subseteq E$.  As $A^E$ is a flat $A^F$-algebra we have an obvious map
$\eta^E_F \colon G(A^F) \rt G(A^E)$  given by $M \rt M\otimes_{A^F} A^E $. After tensoring with $\Q$ denote this map by 
$(\eta^E_F)_\Q$.
It is clear that we have a direct system of abelian groups $\{G(A^E) \}_{E \in \C_k}$. So we have 
an abelian group $\lim_{E \in \C_k}G(A^E) $ and natural maps $\eta_E \colon G(A^E) \rt \lim_{E \in \C_k}G(A^E)$.

Next we show
\begin{lemma}\label{inclusion}
Let $k \subseteq F \subseteq E$. Then the map $(\eta^E_F)_\Q \colon G(A^F)_\Q \rt G(A^E)_\Q $ is an inclusion of $\Q$-vector spaces.
\end{lemma}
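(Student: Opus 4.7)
The plan is to produce a left inverse to $(\eta^E_F)_\Q$ up to a nonzero scalar via restriction of scalars. First I would observe that the map $\theta^E_F\colon A^F\to A^E$ is finite and flat, since by Construction-C.1 we have $A^E \cong A^F[X]/(g(X))$ for a monic polynomial $g(X) \in A^F[X]$ of degree $d := [E:F]$ (one can lift the primitive element presentation $E = F(\alpha_E)$ to a monic polynomial in $A^F[X]$ in the same way as in \ref{const-1}), and it is local with $\m^F A^E = \m^E$ by \ref{E-basic-2}. Since $A^F$ is local, a finitely generated flat $A^F$-module is free, so $A^E$ is free of rank $d$ over $A^F$.

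Next I would define restriction of scalars. Let $\rho\colon \CMa(A^E) \to \CMa(A^F)$ send an $A^E$-module $M$ to $M$ viewed as an $A^F$-module (via $\theta^E_F$). This lands in $\CMa(A^F)$: since $A^E$ is $A^F$-free of rank $d$, an $A^E$-module $M$ which is MCM over $A^E$ remains MCM over $A^F$ (both rings have the same Krull dimension, and depth is preserved under restriction along a finite free extension). Any short exact sequence of $A^E$-modules is a short exact sequence of $A^F$-modules, so $\rho$ induces a homomorphism on Grothendieck groups which, by abuse of notation, I will also denote $\rho\colon G(A^E)\to G(A^F)$.

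Now I compute the composition. For any MCM $A^F$-module $N$, the module $\eta^E_F(N) = N\otimes_{A^F} A^E$, restricted back along $\theta^E_F$, is isomorphic as an $A^F$-module to $N^{\oplus d}$, because $A^E \cong (A^F)^d$ as $A^F$-modules. Hence on classes in $G(A^F)$ we have
\[
\rho\bigl(\eta^E_F(N)\bigr) = d\cdot [N],
\]
so $\rho\circ\eta^E_F = d\cdot \mathrm{id}_{G(A^F)}$. Tensoring with $\Q$, the composition $\rho_\Q\circ(\eta^E_F)_\Q$ equals multiplication by the nonzero rational number $d$, hence is an automorphism of $G(A^F)_\Q$. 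In particular $(\eta^E_F)_\Q$ is injective, proving the lemma.

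The only mild subtlety is checking that $\rho$ preserves MCM and is well-defined on Grothendieck groups; this follows cleanly from $A^E$ being $A^F$-free of finite rank. No genuine obstacle arises.
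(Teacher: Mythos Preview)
Your proof is correct and is essentially the same as the paper's: both define restriction of scalars $G(A^E)\to G(A^F)$ using that $A^E$ is finite free over $A^F$, observe that the composition with $\eta^E_F$ is multiplication by the rank, and conclude injectivity after tensoring with $\Q$. Your write-up is a bit more detailed in justifying why $A^E$ is $A^F$-free and why restriction preserves MCM, but the argument is identical in substance.
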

\begin{proof}
We note that via $\theta^E_F \colon  A^F \rt A^E$ we get that $A^E$ is a finite free $A^F$-module, say of rank $r$. It follows that any MCM $A^E$-module is also an MCM $A^F$-module. So  we have the obvious map
$\phi \colon G(A^E) \rt G(A^F)$. 

Set $\delta = ( \phi\otimes \Q)\circ  (\eta^E_F)_\Q$. Let $M$ be a MCM $A^F$-module.   Then note that 
$\delta([M]) = r [M]$. So $\delta$ is an isomorphism. In particular $(\eta^E_F )_\Q $ is an inclusion. 
\end{proof}
As an immediate consequence we get Proposition \ref{inj-intro}, which we restate for the convenience of the reader. 
\begin{corollary}\label{eta}
Let $F \in \C_k$ be any. 
The map $(\eta_F)_\Q \colon G(A^F)_\Q \rt \lim_{E \in \C_k}G(A^E)_\Q$ is injective. 
\end{corollary}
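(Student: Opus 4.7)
The plan is to derive this as a direct consequence of Lemma \ref{inclusion}, using a standard fact about direct limits of directed systems in which every structure map is injective. No new construction is needed.

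First I would recall the explicit description of the direct limit: an element $x \in G(A^F)_\Q$ has $(\eta_F)_\Q(x) = 0$ in $\lim_{E \in \C_k} G(A^E)_\Q$ if and only if there exists some $E \in \C_k$ with $E \supseteq F$ such that $(\eta^E_F)_\Q(x) = 0$ in $G(A^E)_\Q$. This is the standard criterion for vanishing in a filtered colimit of abelian groups (or $\Q$-vector spaces), and it is available here because $\C_k$ is directed (given any $E' \in \C_k$, the composite $E' F \in \C_k$ dominates both $F$ and $E'$, so we may always assume the witness $E$ contains $F$).

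Next I would apply Lemma \ref{inclusion}: for the $E$ provided by the vanishing criterion, the map $(\eta^E_F)_\Q \colon G(A^F)_\Q \rt G(A^E)_\Q$ is an inclusion of $\Q$-vector spaces. Hence $(\eta^E_F)_\Q(x) = 0$ forces $x = 0$, which is exactly the injectivity of $(\eta_F)_\Q$. This would complete the proof.

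There is essentially no obstacle; the only point worth checking is the vanishing criterion for filtered colimits, but that is a textbook fact (see for instance \cite[Theorem A1]{Mat} or any standard reference on directed systems). The entire argument is at most two or three lines once Lemma \ref{inclusion} is in hand, so I would write it as: \emph{By Lemma \ref{inclusion}, every map $(\eta^E_F)_\Q$ with $E \supseteq F$ is injective. Since a filtered colimit of an injective directed system has the property that each canonical map into the colimit is injective, $(\eta_F)_\Q$ is injective.}
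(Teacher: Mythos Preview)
Your proposal is correct and follows the same approach as the paper: the paper's proof is a one-line citation to \cite[Chapter III, Exercise 19]{L}, which is precisely the standard fact that canonical maps into a filtered colimit of an injective directed system are injective, combined with Lemma \ref{inclusion}. You have simply unpacked that exercise explicitly.
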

\begin{proof}
See Chapter III, Exercise 19 in \cite{L}.
\end{proof}
\s \label{const-K.2}\textbf{Construction-K.2:} Let $E \in \C_k$. As $T$ is a flat $A^F$-algebra we have an obvious map
$\xi_E \colon G(A^E) \rt G(T)$  given by $M \rt M\otimes_{A^E} T $.  The maps $\xi_E$ are compatiable with $\eta^E_F$ whenever $k \subseteq F \subseteq E$.
So we have a natural map
\[
\xi \colon \lim_{E \in \C_k}G(A^E) \rt G(T).
\]
We restate Theorem \ref{main-intro} for the convenience of the reader.
\begin{theorem}
\label{main-body} $\xi$ is an isomorphism.
\end{theorem}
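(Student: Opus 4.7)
The plan is to prove $\xi$ is both surjective and injective, using Lemmas~\ref{l-p-t} and~\ref{AR-compare} together with the faithful flatness of $A^E \to T$ (which follows from $\m^E T = \m^T$ and $T/\m^E T \cong \ov{k}$). For surjectivity, given an MCM $T$-module $M$, Lemma~\ref{l-p-t}(1) produces $E \in \C_k$ and a finitely generated $A^E$-module $N$ with $N \otimes_{A^E} T \cong M$. The depth/dimension formula for the faithfully flat local map $A^E \to T$ then yields $\depth_{A^E} N = \depth_T M = \dim T = \dim A^E$, so $N$ is MCM over $A^E$, and $\xi(\eta_E([N])) = [M]$.

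For injectivity, let $\alpha \in \lim_{E \in \C_k} G(A^E)$ with $\xi(\alpha) = 0$, and pick a representative $\beta = [P] - [Q] \in G(A^E)$ with $P, Q$ MCM over $A^E$ and $\eta_E(\beta) = \alpha$. The hypothesis yields $[P \otimes T] - [Q \otimes T] \in AR_0(T)$, so there are finitely many AR-sequences $s_k \colon 0 \to X_k \to Y_k \to Z_k \to 0$ in $T$ (with $X_k, Z_k$ indecomposable) and integers $c_k$ satisfying
\[
 [P \otimes T] - [Q \otimes T] = \sum_k c_k\bigl([X_k] - [Y_k] + [Z_k]\bigr)
\]
in the free abelian group $F(T)$.

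The heart of the argument is to enlarge $E$ to a single $F \in \C_k$ to which all of this data descends. First, Lemma~\ref{AR-compare} lets each $s_k$ descend to an AR-sequence in $A^{F_k}$ for some $F_k \supseteq E$. Second, applying Lemma~\ref{l-p-t}(1) to every indecomposable $L$ of $T$ appearing in the Krull--Schmidt decompositions of $P \otimes T$, $Q \otimes T$, or any $Y_k$ gives an $A^{G_L}$-module $N_L$ with $N_L \otimes T \cong L$; since $L$ is indecomposable, so is $N_L$. Set $F$ to be the composite of $E$ with the finitely many $F_k$ and $G_L$; then $F \in \C_k$. Enlarging $F$ further if needed, Lemma~\ref{l-p-t}(2) allows us to descend the Krull--Schmidt isomorphisms $P \otimes T \cong \bigoplus L_i^{p_i}$, $Q \otimes T \cong \bigoplus L_i^{q_i}$, and $Y_k \otimes T \cong \bigoplus L_i^{y_{k,i}}$ to $A^F$-linear isomorphisms $P \otimes A^F \cong \bigoplus (N_{L_i}^F)^{p_i}$, etc., where $N_{L_i}^F := N_{L_i} \otimes A^F$. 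By Krull--Schmidt over the Henselian local $A^F$ these are the Krull--Schmidt decompositions over $A^F$, and Lemma~\ref{AR-compare}(b) ensures $s_k^F := s_k^{F_k} \otimes A^F$ is still an AR-sequence in $A^F$.

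Matching multiplicities coefficient-by-coefficient, the $F(T)$-relation then transfers verbatim to $F(A^F)$:
\[
 [P \otimes A^F] - [Q \otimes A^F] = \sum_k c_k\bigl([X_k^F] - [Y_k^F] + [Z_k^F]\bigr) \in AR_0(A^F),
\]
so $\eta^F_E(\beta) = 0$ in $G(A^F)$, and hence $\alpha = \eta_F(\eta^F_E(\beta)) = 0$. The main obstacle is precisely the coordination of Lemmas~\ref{l-p-t} and~\ref{AR-compare} in this descent step: one must simultaneously arrange that all $T$-indecomposables appearing in the Krull--Schmidt decompositions of $P \otimes T$, $Q \otimes T$, and the middle terms $Y_k$ descend to a common $F$ as $A^F$-indecomposables with matching multiplicities, and that all the AR-sequences involved also descend to AR-sequences in $A^F$. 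Once this bookkeeping is in place, both directions follow formally.
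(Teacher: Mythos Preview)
Your proof is correct, but it follows a genuinely different route from the paper's. For surjectivity the paper first fixes, once and for all, an $F \in \C_k$ to which \emph{every} indecomposable MCM $T$-module and \emph{every} AR-sequence of $T$ descends (Construction~K.3); then for each $E \in \C_F$ the map $G(A^E) \to G(T)$ is visibly onto. For injectivity the paper does \emph{not} chase a single relation down as you do. Instead it introduces, for each $E \in \C_F$, an auxiliary group $D(E)$ built from the descended $T$-indecomposables and the descended AR-relations only; by construction $D(E) \to G(T)$ is an isomorphism, and there is a factorization $D(E) \xrightarrow{\alpha_E} G(A^E) \xrightarrow{\xi_E} G(T)$. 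Passing to the limit, $\beta = \lim \beta_E$ is an isomorphism, so $\alpha = \lim \alpha_E$ is injective and it suffices to show $\alpha$ is surjective; this is done via a general lemma on direct limits (Lemma~\ref{lem-surj}) together with one application of Lemma~\ref{l-p-t}(2). Your argument bypasses the $D(E)$ machinery and Lemma~\ref{lem-surj} entirely, trading a structural triangle for a hands-on descent of a specific $AR_0(T)$-relation; this is more elementary but requires more careful bookkeeping (e.g.\ you should also identify the descended end terms $X_k^F, Z_k^F$ from Lemma~\ref{AR-compare} with the corresponding $N_{L_i}^F$, which needs one more enlargement of $F$ via Lemma~\ref{l-p-t}(2)). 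The paper's approach, by fixing $F$ globally at the outset, avoids repeating this coordination for each element of the kernel.
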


The proof of Theorem \ref{main-body} requires a few preliminaries.

\s \label{const-K.3}\textbf{Construction-K.3:} We know that $T$ is of finite representation type.
Let $\Ic_T = \{M_1,\ldots, M_s \}$.
  By \ref{l-p-t} we can choose $F \in \C_k$ and indecomposable MCM $A^F$-modules $ M_1^F,\ldots, M_s^F$ with
$M_i = M_i^F\otimes_{A^F} T$ for $i = 1,\ldots, s$.
Set 
$$\Ic_{A^F}^\prime = \{ M_1^F,\ldots, M_s^F \}.$$
Note $\Ic_{A^F}^\prime $ can be a proper subset of $\Ic_{A^F}$ the set of all indecomposable MCM $A^F$-modules. 
By \ref{AR-compare} we may further assume (after possibly taking a finite extension of $F$)
that there exists a finite subset $\Rc_{A_F}$ of  $\AR(A^F)$ such that $\Rc_{A_F}\otimes_{A^F}T = \AR(T)$.
Further note that  $\Rc_{A_F}$ need not equal $\AR(A^F)$.

Consider the set 
$$\C_F = \{E \mid E \ \text{is a finite extension of } \ F, \ \text{and} \ E \subseteq \ov{k} \}.$$
Then $\C_F$ is co-final in $\C_k$. Also for $E\in \C_F$ we may choose $\Ic_{A^E}^\prime $ with 
$\Ic_{A^E}^\prime = \Ic_{A^F}^\prime \otimes_{A^F} A^E $. Also set $\Rc_{A_E} = \Rc_{A_F} \otimes_{A^F} A^E$.

\begin{remark}
\label{surj} It is obvious that the natural map $G(A^E) \rt G(T)$  is surjective for each $E \in \C_F$. So
$\xi_E$ is surjective. So the map 
\[
\xi^\prime  \colon \lim_{E \in \C_F}G(A^E) \rt G(T), \quad \text{is surjective.}
\]
As $\C_F$ is co-final in $\C_k$ we get
\[
\lim_{E \in \C_F}G(A^E)  = \lim_{E \in \C_k}G(A^E)  \ \quad \text{and} \ \xi^\prime = \xi.
\]
So $\xi$ is surjective.
\end{remark}
To prove  $\xi$ is injective requires some more work.
\s \label{const-K.4}\textbf{Construction-K.4:} Let $E \in \C_F$.  Let $H(E)$ be the free abelian group generated on $\add(\Ic_{A^E}^\prime)$.  Let $H_0(E)$ be the subgroup of $H(E)$ generated by 
$$\{ X_1-X_2 + X_3 \mid  \text{ there is a sequence} \  0 \rt X_1 \rt X_2 \rt X_3 \rt 0 \ \text{in} \ \Rc_{A_E}  \}.$$
Set $D(E) = H(E)/H_0(E)$. By our construction we have
\begin{enumerate}
\item
If $F \subseteq E_1 \subseteq  E_2$ then the map $-\otimes_{A^{E_1}} A^{E_2}$ induces an isomorphism 
$D(E_1) \rt D(E_2)$.
\item
Let $E \in \C_F$. The map $-\otimes_{A^E} T$ induces an isomorphism 
$D(E) \rt G(T)$.
\item
We also have an obvious map $D(E) \rt G(A^E)$ for all $E \in \C_F$. 
\end{enumerate}
 We have a commutative diagram
\[
\xymatrix{ 
D(E)
\ar@{->}[d]_{\alpha_E} 
\ar@{->}[dr]^{\beta_E}
 \\ 
G(A^E)
\ar@{->}[r]_{\xi_E} 
& G(T)
}
\]
We have directed system $\{ D(E)\}_{E \in \C_F}$ where the maps are induced as in \ref{const-K.4}(1) (which are isomorphisms). The maps $\{ \alpha_E \}_{E \in \C_F}$ is a map of directed systems. Also $\{\beta_E \}_{E \in \C_F}$ and 
$\{\xi_E \}_{E \in \C_F}$ are compatible with the obvious maps.  So we have a commutative diagram
\[
\xymatrix{ 
\lim_{E \in \C_F} D(E)
\ar@{->}[d]_{\alpha} 
\ar@{->}[dr]^{\beta}
 \\ 
\lim_{E \in \C_F} G(A^E)
\ar@{->}[r]_{\xi} 
& G(T)
}
\]
\begin{remark}\label{surj-roy}
Note $\beta $ is an isomorphism. So $\alpha$ is injective. Also $\xi$ is surjective. To prove $\xi$ is an isomorphism it suffices to show $\alpha$ is surjective (and so $\alpha$ is an isomorphism).
\end{remark}

To prove $\alpha$ is surjective we need the following:
\begin{lemma}\label{lem-surj}
Let $R$ be a commutative ring.
Let $\Lambda$ be a directed set.
Let $\{W_i \}_{i \in \Lambda}$ and $\{V_i \}_{i \in \Lambda}$ be two directed system of $R$-modules with
maps $\alpha_i^j \colon W_i \rt W_j$ and $\beta_i^j \colon V_i \rt V_j$ for all $i <j$ in $\Lambda$.
We do not assume $W_i$ or $V_i$ are finitely generated $R$-modules.
Suppose we have a 
 map of direct systems  $\{\eta_i \colon W_i \rt V_i \}_{i \in \Lambda}$ . 
  Further assume that for each $i \in \Lambda$ and each $v \in V_i$ there exists $j > i$ and $w \in W_j$ such that $\eta_j(w) = \beta_i^j(v)$ (here $j$ might possibly depend on $v$). Then the map 
  $$ \eta \colon \lim_{i \in \Lambda}W_i \rt \lim_{i \in \Lambda}V_i \quad \text{is surjective.} $$
\end{lemma}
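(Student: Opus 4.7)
The plan is to argue directly from the universal property of the colimit (directed limit). Any element of $\lim_{i \in \Lambda} V_i$ is represented by some element $v \in V_i$ for some index $i \in \Lambda$, where two representatives $v \in V_i$ and $v' \in V_{i'}$ represent the same element precisely when there exists $j \geq i, i'$ in $\Lambda$ with $\beta_i^j(v) = \beta_{i'}^j(v')$. Similarly for $\lim_{i \in \Lambda} W_i$. The compatibility of the $\eta_i$ with the structure maps guarantees that the induced map $\eta \colon \lim W_i \to \lim V_i$ sends the class $[w]$ (represented by $w \in W_j$) to the class $[\eta_j(w)]$.

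Given $\zeta \in \lim V_i$, I would pick a representative $v \in V_i$ for some $i \in \Lambda$ so that $\zeta$ is the image of $v$ under the canonical map $V_i \to \lim V_i$. Invoking the hypothesis, there exist $j \geq i$ in $\Lambda$ and an element $w \in W_j$ with $\eta_j(w) = \beta_i^j(v)$. Then in $\lim V_i$ we have the chain of equalities $\zeta = [v] = [\beta_i^j(v)] = [\eta_j(w)] = \eta([w])$, where the first comes from the definition of the colimit (applying the structure map $\beta_i^j$ does not change the class), the second is the hypothesis, and the third is the definition of $\eta$ on representatives. Hence $\zeta$ lies in the image of $\eta$, and since $\zeta$ was arbitrary, $\eta$ is surjective.

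There is essentially no obstacle in this argument: it is a completely formal consequence of the construction of direct limits of modules over a commutative ring, for which elements admit concrete representatives and two representatives agree exactly when they become equal after pushing forward far enough. The hypothesis is tailored precisely to let us lift any representative $v \in V_i$ to a representative in some $W_j$ after pushing $v$ forward to $V_j$, which is all that is needed to conclude surjectivity at the level of colimits. No finite-generation assumption on $W_i$ or $V_i$ is required, in line with the statement.
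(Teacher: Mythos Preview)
Your argument is correct and is essentially the same as the paper's: both lift a representative of an element of $\lim V_i$ by pushing it forward along $\beta_i^j$ and invoking the hypothesis to find a preimage in $W_j$. The only cosmetic difference is that the paper writes a general element of the colimit as a finite sum $\sum_l x_{i_l}$ and lifts each summand separately, whereas you first use directedness to reduce to a single representative $v \in V_i$; your version is slightly cleaner but the content is identical.
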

\begin{proof}
Let us recall the construction of $\lim_{i \in \Lambda}V_i$, see \cite[Theorem(III, 10.1)]{L}.
Let $F = \bigoplus_{i \in \Lambda} V_i$. Let $N$ be the submodule of $F$ generated by $x - \beta_i^j(x)$ for all $x \in V_i$, for all $j > i$  (and for all $i$). Then  $\lim_{i \in \Lambda}V_i = F/N$. Similarly we can construct $\lim_{i \in \Lambda}W_i $.

Let $\theta = \sum_{l = 1}^{r}x_{i_l} \in \lim_{i \in \Lambda}V_i $ with $x_{i_l} \in V_{i_l}$. By our hypothesis for each $i_l$ there exists $j_l > i_l$ and $y_{j_l} \in W_{j_l}$ with 
$$\eta_{j_l}( y_{j_l}) = \beta_{i_l}^{j_l}(x_{i_l}).$$
We note that 
$$ \eta(\sum_{l =1}^{r} y_{j_l}) = \sum_{l =1}^{r}\beta_{i_l}^{j_l}(x_{i_l}) = \theta.$$
So $\eta$ is onto.
\end{proof}

We now give 
\begin{proof}[Proof of \ref{main-body}]
By \ref{surj} we get $\xi$ is surjective. To show $\xi$ is an isomorphism, by remark \ref{surj-roy} it suffices to show that $\alpha$ is surjective. 

We have two direct systems $\{ D(E) \}_{E \in \C_F}$ and  $\{ G(A^E) \}_{E \in \C_F}$ and maps
of direct systems $\{\alpha_E \colon  D(E) \rt  G(A^E) \}_{E \in \C_F}$.
Let $M$ be a MCM $A^E$-module. Then $M\otimes_{A^E} T = \bigoplus_{i=1}^{s}M_i^{a_i}$ for some $a_i \geq 0$.
Set $V = \bigoplus_{i=1}^{s}(M_i^E)^{a_i}$ Then $V$ is an MCM $A^E$-module and $V\otimes_{A^E} T \cong M \otimes_{A^E} T$
as $T$-modules.
Then by
Lemma \ref{l-p-t} there exists $K \supseteq E$ such that $M\otimes_{A^E}A^K \cong V\otimes_{A^E}A^K $ as $A^K$-modules. We note that $ V\otimes_{A^E}A^K \in \add(\Ic^\prime_{A^K})$ and 
$$\alpha_K (V\otimes_{A^E}A^K ) = M\otimes_{A^E}A^K = \eta_E^K(M).$$
So our direct systems satisfy the hypotheses of Lemma \ref{lem-surj}. Thus $\alpha$ is surjective. 
\end{proof}
We now give
\begin{proof}[Proof of \ref{intro-cor}]
By \ref{inclusion} and \ref{main-body}   we have an injection $G(A)_\Q \rt G(T)_\Q$. Also $G(T) \cong G(\widehat{T})$. By \cite[10.17]{LW} $\widehat{T}$ 
is an ADE-singularity. The Grothendieck groups of ADE-singularities have been computed, see \cite[13.10]{Y}.

(2) We have
$$\dim_\Q G(A)_\Q \leq \dim_\Q G(\widehat{T})_\Q  \leq 3.$$ 
The result follows.

(1)  We have
$$\dim_\Q G(A)_\Q \leq \dim_\Q G(\widehat{T})_\Q  = 1.$$
 Also as $A$ is an isolated singularity of dimension $\geq 2$ we get that $A$ is a domain. So
 we have an obvious surjective map $\rank \colon G(Q) \rt \Z$ which maps $M$ to $\rank(M)$. It follows that
 $\dim_\Q G(A)_\Q  \geq 1$. The result follows.
\end{proof}
\section{An example}
We now give an example which proves two things:

\begin{enumerate}[ \rm (1)]
\item
If $\dim A$ is odd then  $G(A)_\Q$ can be proper subspace of $G(\widehat{T})_\Q$.
\item
In general we cannot compare torsion subgroups of $G(A)$ and $G(\widehat{T})$
\end{enumerate}

The example is $A = \mathbb{R}[[x,y]]/(x^2 + y^2)$. Note $\widehat{T} = \mathbb{C}[[x,y]]/(x^2 + y^2)$ is the 
$A_1$-singularity.
By  \cite[p.\ 134]{Y} we get that $G(A) = \Z \oplus \Z/2\Z$. While $G(\widehat{T}) = \Z^2$, see \cite[13.10]{Y}. This proves both our assertions.

 \end{document}